\documentclass[a4paper,12pt]{article}

\usepackage{amsmath,amsfonts,amssymb}
\usepackage{amsthm}
\usepackage{color,multicol}
\usepackage{graphicx}

\newtheorem{lemma}{Lemma}[section]
\newtheorem{theorem}[lemma]{Theorem}

\newtheorem{definition}[lemma]{Definition}

\newtheorem{remark}[lemma]{Remark}

\parindent=0pt

\begin{document}

\title{Eigenvalue bounds for a class of Schr\"odinger operators in a strip  }

\author{Martin Karuhanga\\Department of Mathematics,\\ Mbarara University of Science and Technology,\\P.O BOX 1410, Mbarara, Uganda\\E-mail: mkaruhanga@must.ac.ug}

\date{}

\maketitle

\begin{abstract}
This paper is concerned with the estimation of the number of negative eigenvalues (bound states) of Schr\"odinger operators in a strip subject to Neumann boundary conditions. The estimates involve weighted $L^1$ norms and $L\ln L$ norms of the potential.  Estimates involving the norms of the potential supported by a curve embedded in a strip are also presented.
\end{abstract}
\noindent
{\bf Keywords}: Eigenvalue bounds; Schr\"odinger operators; strip\\\\
{\bf  Mathematics Subject Classification} (2010): 35P05, 35P15

\section{Introduction}
The celebrated Cwikel-Lieb-Rozenblum (CLR) inequality \cite{Roz} gives an upper bound for the number of negative eigenvalues of the Schr\"odinger operator $-\Delta - V,\,V\ge 0$ on $L^2(\mathbb{R}^d),\,d > 2$. It is known that the CLR inequality does not hold for $d = 2$ and one of the reasons is that the Sobolev space $H^1(\mathbb{R}^2)$ is not continuously embedded in $L^{\infty}(\mathbb{R}^2)$. However, $H^1(\mathbb{R}^2)\hookrightarrow L^q(\mathbb{R}^2)$ for all $q\in [2, +\infty)$ and there are estimates involving $\int_{\mathbb{R}^2}|V(x)|^r\,dx , \, \forall r > 1$ (see, e.g., \cite{BL, Grig, LM}).
More precisely, $H^1(\mathbb{R}^2)$ is embedded in a space of exponentially integrable functions which lies between $L^1(\mathbb{R}^2)$ and $L^p(\mathbb{R}^2),\,p>1$ (see, e.g., \cite{Ad}). This gives rise to estimates involving a norm of $V$ weaker than $\|V\|_{L^r},\, r>1$, namely, the Orlciz $L\ln L$ norm (see, e.g., \cite{Eugene, Sol}). The strongest known estimates have been obtained in \cite{Eugene}. For more information regarding upper estimates for the number of negative eigenvalues of two-dimensional Schr\"odinger operators refer to \cite{BL, FL, Grig, Kar, LM, Eugene, Sol} and the references therein.
This paper provides estimates for the number of negative eigenvalues of the Schr\"odinger operator $-\Delta - V$ on $L^2(S)$ whose domain is characterized by the Neumann boundary conditions, where $S$ is an infinite straight strip . We use the results of Shargorodsky \cite{Eugene} to obtain improved versions of the estimates by Grigor'yan and Nadirashvili \cite{Grig}. This improvement is achieved by replacing $\|V\|_{L^p},\, p > 1$ in the estimates of \cite[Section 7]{Grig} by the $L \ln L$ norms of $V$. In addition, these estimates are extended to the case of strongly singular potentials (see Section \ref{singular}). The precise description of the operator here studied is as follows:\\
Let $S := \{(x_1, x_2)\in\mathbb{R}^2\;:\;x_1\in\mathbb{R},\; 0 < x_2 < a\},\;a > 0$ and $V : \mathbb{R}^2 \longrightarrow \mathbb{R}$  be a function integrable on bounded subsets of $S$. Consider the following self-adjoint operator on $L^2(S)$
\begin{equation}\label{self-adj}
H_V = -\Delta - V, \;\;\;V\ge 0,
\end{equation}
with homogeneous Neumann boundary conditions both at $x_2 = 0$ and $x_2 = a$. The main objective of this paper is to obtain estimates for the number of negative eigenvalues of $\eqref{self-adj}$ in terms of the norms of $V$. \\\\
The strategy used here is as follows: The problem is split into two problems. The first one is defined by the restriction of the quadratic form associated with the operator \eqref{self-adj} to the subspace of functions of the form $w(x_1)u_1(x_2)$, where $u_1$ is the first eigenfunction of the one-dimensional differential operator on $L^2((0, a))$ and hence, is  reduced to a well studied one-dimensional Schr\"odinger operator with the potential equal to a weighted mean value $\widetilde{V}$ of $V$ over $(0, a)$. The second problem is defined by a class of functions orthogonal to constant functions in the $L^2((0, a))$ inner product.

\section{Preliminaries}
Let $(\Omega, \Sigma, \mu)$ be a measure space and let $\Psi : [0, +\infty) \rightarrow [0, +\infty)$ be a non-decreasing function. The Orlicz class $K_{\Psi}(\Omega)$ is the set of all of measurable functions $f : \Omega \rightarrow \mathbb{C}\;( \textrm{or}\;\mathbb{R})$ such that
\begin{equation}\label{orliczeqn}
\int_{\Omega}\Psi(|f(x)|)d\mu(x) < \infty\,.
\end{equation} If $\Psi(t) = t^p,\; 1\le p < \infty$, this is just the $L^p(\Omega)$ space. The \textit{Orlicz space} $ L_{\Psi}(\Omega)$ is the linear span of the Orlicz class $K_{\Psi}(\Omega)$, that is, the smallest vector space containing $K_{\Psi}(\Omega)$.
\begin{definition}
{\rm A continuous non-decreasing convex function $\Psi : [0, +\infty) \rightarrow [0, +\infty)$ is called an $N$-function if
$$
\underset{t \rightarrow 0+}\lim\frac{\Psi (t)}{t} = 0 \;\;\; \textrm{and }\;\;\;\underset{t \rightarrow \infty}\lim\frac{\Psi (t)}{t} = \infty.
$$ The function $\Phi : [0, +\infty) \rightarrow [0, +\infty)$ defined by
$$
\Phi(t) := \underset{s\geq 0}\sup\left(st - \Psi(s)\right)
$$ is called complementary to $\Psi$.
}
\end{definition}
\begin{definition}
{\rm An $N$-function $\Psi$ is said to satisfy a global $\Delta_2$-condition if there exists a positive constant $k$ such that for every $t \geq 0$,
\begin{equation}\label{global}
\Psi(2t)\le k\Psi(t).
\end{equation}
Similarly $\Psi$ is said to satisfy a $\Delta_2$-condition near infinity if there exists $t_0 > 0$ such that \eqref{global} holds for all $t \geq t_0$.}
\end{definition}

Let $\Phi$ and $\Psi$ be mutually complementary $N$-functions, and let $L_\Phi(\Omega)$,
$L_\Psi(\Omega)$ be the corresponding Orlicz spaces.  We will use the following
norms on $L_\Psi(\Omega)$
\begin{equation}\label{Orlicz}
\|f\|_{\Psi} = \|f\|_{\Psi, \Omega} = \sup\left\{\left|\int_\Omega f g d\mu\right| : \
\int_\Omega \Phi(|g|) d\mu \le 1\right\}
\end{equation}
and
\begin{equation}\label{Luxemburg}
\|f\|_{(\Psi)} = \|f\|_{(\Psi, \Omega)} = \inf\left\{\kappa > 0 : \
\int_\Omega \Psi\left(\frac{|f|}{\kappa}\right) d\mu \le 1\right\} .
\end{equation}
These two norms are equivalent
\begin{equation}\label{Luxemburgequiv}
\|f\|_{(\Psi)} \le \|f\|_{\Psi} \le 2 \|f\|_{(\Psi)}\, , \ \ \ \forall f \in L_\Psi(\Omega),
\end{equation}(see \cite{Ad}).\\
Note that
\begin{equation}\label{LuxNormImpl}
\int_\Omega \Psi\left(\frac{|f|}{\kappa_0}\right) d\mu \le C_0, \ \ C_0 \ge 1  \ \ \Longrightarrow \ \
\|f\|_{(\Psi)} \le C_0 \kappa_0 .
\end{equation}
It follows from \eqref{LuxNormImpl} with $\kappa_0 = 1$ that
\begin{equation}\label{LuxNormPre}
\|f\|_{(\Psi)} \le \max\left\{1, \int_{\Omega} \Psi(|f|) d\mu\right\} .
\end{equation}

We will need the following
equivalent norm on $L_\Psi(\Omega)$ with $\mu(\Omega) < \infty$ which was introduced in
\cite{Sol}:
\begin{equation}\label{OrlAverage}
\|f\|^{\rm (av)}_{\Psi} = \|f\|^{\rm (av)}_{\Psi, \Omega} = \sup\left\{\left|\int_\Omega f g d\mu\right| : \
\int_\Omega \Phi(|g|) d\mu \le \mu(\Omega)\right\} .
\end{equation}
We will use the following pair of pairwise complementary $N$-functions
\begin{equation}\label{thepair}
\mathcal{A}(s) = e^{|s|} - 1 - |s| , \ \ \ \mathcal{B}(s) = (1 + |s|) \ln(1 + |s|) - |s| , \ \ \ s \in \mathbb{R} .
\end{equation}
Let $I_1, I_2 \subseteq \mathbb{R}$ be nonempty open intervals.
We denote by $L_1\left(I_1, L_{\mathcal{B}}(I_2)\right)$ the space of measurable functions
$f : I_1\times I_2 \to \mathbb{C}$ such that
\begin{equation}\label{L1LlogLnorm}
\|f\|_{L_1\left(I_1, L_{\mathcal{B}}(I_2)\right)} := \int_{I_1}
\|f(x, \cdot)\|_{\mathcal{B}, I_2}\, dx < +\infty\,.
\end{equation}
Let us recall that a sequence $\{a_n\}$ belongs to the ``weak $l_1$-space'' (Lorentz space) $l_{1,w}$ if the following quasinorm
\begin{equation}\label{quasi}
\|\{a_n\}\|_{1,w} = \underset{s > 0}\sup\left(s \;\textrm{card}\{n\;:\;|a_n| > s\}\right)
\end{equation} is finite. It is a quasinorm in the sense that it satisfies the weak version of the triangle inequality:
$$
\|\{a_n\} + \{b_n\}\|_{1,w} \le 2\left(\|\{a_n\}\|_{1,w} + \|\{b_n\}\|_{1,w}\right).
$$

The quasinorm \eqref{quasi} induces a topology on $l_{1,w}$ in which this space is non-separable. The closure of the set of elements $a_n$ with only finite number of non-zero terms is a separable subspace in $l_{1,w}$.
It is well known that $l_1 \subset l_{1,w}$ and
$$
\|\{a_n\}\|_{1,w} \le \|\{a_n\}\|_{l_1}
$$ (see, e.g., \cite{BirSol} for more details).

\section{Estimating the number of negative eigenvalues in a strip}
Define \eqref{self-adj} via its quadratic form
\begin{eqnarray*}
q_{V,S}[u]&:=& \int_{S}|\nabla u(x)|^2dx - \int_{S}V(x)|u(x)|^2\,dx,\\
\textrm{Dom}(q_{V, S}) &=& W^1_2(S)\cap L^2(S, Vdx),
\end{eqnarray*}where $W^1_2(S)$ denotes the standard Sobolev space $H^1(S)$.
Let $N_-(q_{V, S})$ denote the number of negative eigenvalues of $\eqref{self-adj}$ repeated according to their multiplicities.
Then $N_-(q_{V, S})$ is given by
\begin{equation}\label{negative}
N_-(q_{V, S})= \underset{L}\sup\{\textrm{dim} \,L \,:\,q_{V,S}[u] < 0,\,\forall u\in L\setminus \{0\}\},
\end{equation}
where $L$ denotes a linear subspace of $\textrm{Dom}(q_{V, S})$ (see, e.g., \cite[Theorem 10.2.3]{BirSol}).\\Let
\begin{eqnarray*}
I_n := [2^{n - 1}, 2^n], \ n > 0 , \ \ \ I_0 := [-1, 1] , \ \ \
I_n := [-2^{|n|}, -2^{|n| - 1}], \ n < 0
\end{eqnarray*} and
\begin{equation}\label{calAn*}
\mathcal{G}_n := \int_{I_n}\int_0^a |x_1| V(x)\, dx , \ n \not= 0 , \ \ \ \mathcal{G}_0 := \int_{I_0}\int_0^a V(x)\, dx .
\end{equation}
Furthermore, let
$$J_n:= (n ,n+1), \,n\in\mathbb{Z}, \,I := (0, a)\mbox{ and } \mathcal{D}_n := \parallel V\parallel_{L_1(J_n, L_\mathcal{B}(I))}.$$ Then we have the following result.
\begin{theorem}\label{thm1}
There exist constants $c, C > 0$ such that
\begin{equation}\label{Gest2}
N_-(q_{V, S})\leq 1 + C\left(\sum_{\{n\in\mathbb{Z}, {\mathcal{G}_n > c\}}}\sqrt{\mathcal{G}_n} + \sum_{\{n\in\mathbb{Z}, \mathcal{D}_n > c\}}\mathcal{D}_n\right), \;\;\;\;\forall V\geq 0.
\end{equation}
\end{theorem}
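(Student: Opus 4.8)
The plan is to carry out the two-part splitting sketched in the introduction. Set $m := \pi/a$ and write $L^2(S) = \mathcal{H}_1 \oplus \mathcal{H}_2$, where $\mathcal{H}_1$ consists of the functions $u(x_1,x_2)=w(x_1)$ that do not depend on $x_2$ (the span of the first Neumann eigenfunction of $-d^2/dx_2^2$ on $(0,a)$) and $\mathcal{H}_2 = \{u : \int_0^a u(x_1,x_2)\,dx_2 = 0 \text{ for a.e. } x_1\}$. If $u = u_1+u_2$ with $u_j\in\mathcal{H}_j$, then $\|\nabla u\|_{L^2(S)}^2 = \|\nabla u_1\|_{L^2(S)}^2 + \|\nabla u_2\|_{L^2(S)}^2$ (the cross term vanishes, since $\partial_{x_2}u_1=0$ and $\int_0^a u_2\,dx_2 = 0$), while $|u|^2 \le 2|u_1|^2 + 2|u_2|^2$ pointwise, so $q_{V,S}[u] \ge q_{2V,S}[u_1] + q_{2V,S}[u_2]$. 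Passing to a form core on which this decomposition keeps $u_1,u_2$ inside the form domain — a routine step when $V\in L^1_{\mathrm{loc}}$ — and using \eqref{negative}, I obtain $N_-(q_{V,S}) \le N_-(q_{2V,S}|_{\mathcal{H}_1}) + N_-(q_{2V,S}|_{\mathcal{H}_2}) =: n_1 + n_2$.

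To bound $n_1$, note that on $\mathcal{H}_1$, writing $\bar V(x_1):=\int_0^a V(x_1,x_2)\,dx_2$, one has $q_{2V,S}[w] = a\int_{\mathbb{R}}|w'|^2\,dx_1 - 2\int_{\mathbb{R}}\bar V|w|^2\,dx_1$, so $n_1 = N_-\!\left(-\frac{d^2}{dx_1^2} - \frac2a\bar V\right)$ on $L^2(\mathbb{R})$. I would then invoke the sharp one-dimensional bound (see, e.g., \cite{Eugene} and the references therein): there are $c_1,C_1>0$ such that $N_-\!\left(-\frac{d^2}{dx^2}-W\right) \le 1 + C_1\sum_{\{n:\,A_n(W)>c_1\}}\sqrt{A_n(W)}$ with $A_n(W)=\int_{I_n}|x|W(x)\,dx$ for $n\ne 0$ and $A_0(W)=\int_{I_0}W$. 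Since $A_n\!\left(\frac2a\bar V\right) = \frac2a\mathcal{G}_n$ for every $n$, this gives $n_1 \le 1 + C_1'\sum_{\{n:\,\mathcal{G}_n > c_1'\}}\sqrt{\mathcal{G}_n}$.

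To bound $n_2$ I would use the spectral gap. On $\mathcal{H}_2$ the transverse Neumann Laplacian satisfies $-\partial_{x_2}^2 \ge m^2$, and expanding in the Neumann eigenbasis of $(0,a)$ shows $\|\nabla u\|_{L^2(S)}^2 \ge \frac12\|\nabla u\|_{L^2(S)}^2 + \frac12 m^2\|u\|_{L^2(S)}^2$ for $u\in\mathcal{H}_2$; hence $q_{2V,S}[u] \ge \frac12\!\left(\|\nabla u\|_{L^2(S)}^2 + m^2\|u\|_{L^2(S)}^2 - 4\int_S V|u|^2\right)$, and dropping the restriction to $\mathcal{H}_2$ yields $n_2 \le N_-\!\left(-\Delta + m^2 - 4V;\,L^2(S)\right)$. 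Next I would partition $S = \bigcup_{n\in\mathbb{Z}}Q_n$ with $Q_n = J_n\times I$; imposing additional Neumann conditions on the segments $\{n\}\times I$ only lowers the operator (Neumann bracketing), so $n_2 \le \sum_{n}N_-\!\left(-\Delta + m^2 - 4V;\,L^2(Q_n)\right)$. On each cell I split $W^1_2(Q_n)$ into constants and mean-zero functions as before: the constant part contributes at most $\mathbf{1}_{\{\int_{Q_n}V>m^2a/8\}}$, and since $\int_{Q_n}V = \int_{J_n}\|V(x_1,\cdot)\|_{L^1(I)}\,dx_1 \le \mathrm{const}(a)\,\mathcal{D}_n$ this is absorbed into a quantity $\le C\mathcal{D}_n$ once $\mathcal{D}_n$ exceeds a threshold; on the mean-zero subspace $-\Delta_{Q_n}^{\mathrm{Neu}}\ge\beta>0$, so the mass term drops out and it remains to estimate $N_-\!\left(\int_{Q_n}|\nabla v|^2 - 8\int_{Q_n}V|v|^2;\,\{v:\int_{Q_n}v=0\}\right)$. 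For this I would invoke the local $L\ln L$ estimate of Shargorodsky \cite{Eugene} (cf. \cite{Sol}): there are $c_2,C_2>0$ such that this last quantity is $\le C_2\|8V\|_{L_1(J_n,L_{\mathcal{B}}(I))}$ when $\|8V\|_{L_1(J_n,L_{\mathcal{B}}(I))}>c_2$ and equals $0$ otherwise — the vanishing following from the Moser--Trudinger embedding $W^1_2(Q_n)\hookrightarrow L_{\mathcal{A}}(Q_n)$ and the Orlicz duality between $\mathcal{A}$ and $\mathcal{B}$, which make the form nonnegative on the mean-zero subspace when $V$ is locally small. Summing over $n$ gives $n_2 \le C_2'\sum_{\{n:\,\mathcal{D}_n>c_2'\}}\mathcal{D}_n$; together with the bound on $n_1$, and after readjusting the constants ($c:=\min\{c_1',c_2'\}$, $C:=\max\{C_1',C_2'\}$), this is exactly \eqref{Gest2}.

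The hard part is the localized bound for $n_2$: establishing the per-cell estimate in terms of the mixed norm $\|V\|_{L_1(J_n,L_{\mathcal{B}}(I))}$ and, above all, that smallness of this norm rules out negative eigenvalues on the mean-zero subspace. This is precisely where the $L^p$ Sobolev embeddings used in \cite[Section 7]{Grig} are replaced by the sharper Orlicz/Moser--Trudinger estimates of \cite{Eugene}; everything else — the orthogonal splitting, the gap argument, the one-dimensional estimate, the Neumann bracketing, and the bookkeeping of constants — is routine. A secondary technicality is the passage, in the first step, to a form core on which the decomposition preserves the form domain, which is harmless under the stated local integrability of $V$.
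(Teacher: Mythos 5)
Your argument is correct and is essentially the paper's own proof: the same orthogonal splitting $W^1_2(S)=\mathcal{H}_1\oplus\mathcal{H}_2$ with the pointwise bound $|u|^2\le 2|u_1|^2+2|u_2|^2$, the same one-dimensional weighted-$L^1$ estimate from \cite{Eugene} applied to $\widetilde{V}$ to produce the $\sqrt{\mathcal{G}_n}$ terms, and the same cell decomposition $S=\bigcup_n (J_n\times I)$ combined with the per-cell bound $N_-\le C\,\|V\|_{L_1(J_n,L_{\mathcal{B}}(I))}$ (with vanishing when this norm is small, exactly the content of Lemma \ref{lemma6}, which both you and the paper ultimately borrow from \cite{Eugene} via a Poincar\'e/Orlicz-duality absorption and scaling in $x_2$). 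Your only deviation --- inserting the transverse spectral gap $\pi^2/a^2$, dropping the $\mathcal{H}_2$ constraint before Neumann bracketing, and then re-splitting each cell into constants plus mean-zero functions --- is a harmless redundancy: the restriction to $S_n$ of any $u\in\mathcal{H}_2$ already satisfies $\int_{S_n}u\,dx=0$, so one can bracket directly within $\mathcal{H}_2$ as in \eqref{estimate2} and dispense with the mass term and the extra indicator bookkeeping.
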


Let $$L_1 := \{u\in L^2(S) \; : \; u(x) = u(x_1)\}$$ and  $P : L^2(S)\longrightarrow L_1$ be a projection defined by  $$Pv(x) := \frac{1}{a}\int^a_0 v(x)dx_2 = Pv(x_1).$$
Indeed, $P$ is a projection since $P^2 = P$. Let $L_2 := (I - P)L^2(S)$, then one can show that $L^2(S) = L_1 \oplus L_2$. Here and below $\oplus$ denotes a direct orthogonal sum.\\ Indeed for all $v\in L_2$ we have,
\begin{eqnarray*}
\int^a_0 v(x)dx_2 &=& \int^a_0 (I - P)v(x)dx_2\\ &=& \int^a_0 v(x)dx_2 - \int^a_0 Pv(x)dx_2\\ &=&\int^a_0 v(x)dx_2 - \int^a_0 Pv(x_1)dx_2\\&=& \int^a_0 v(x)dx_2 - \int^a_0 v(x)dx_2\\ &=& 0.
\end{eqnarray*} Now pick $w\in L_1$ and $v\in L_2$, then,
\begin{eqnarray*}
\langle v, w\rangle_{L^2(S)} = \int_S v(x)\overline{w(x_1)}\,dx &=&\int_{\mathbb{R}}\left(\int^a_0 v(x)\,dx_2\right)\overline{w(x_1)}dx_1 \\&=&0.
\end{eqnarray*}
Similarly, let
\begin{equation}\label{H}
 \mathcal{H}_1 := \{u\in W^1_2(S) \; : \; u(x) = u(x_1)\} \; \textrm{and}\; \mathcal{H}_2 := (I- P)W^1_2(S),
\end{equation}
 then $$W^1_2(S) = \mathcal{H}_1 \oplus \mathcal{H}_2.$$ Indeed, for all $v\in\mathcal{H}_1$ and all $w\in\mathcal{H}_2$ we have $$\langle v, w\rangle_{W^1_2(S)} =  \int_S \left(v(x_1)\overline{w(x)} +  v_{x_1}(x_1)\overline{w_{x_1}(x)} +  v_{x_2}(x_1)\overline{w_{x_2}(x)}\right)dx = 0.$$ This is so because $v, v_{x_1}\in L_1, \overline{w}, \overline{w_{x_1}}\in L_2$ and $v_{x_2} = 0$. To see this note that $v(x_1)$ and $v_{x_1}(x_1)$ do not depend on $x_2$ implying that $v_{x_1}\in L_1$. Also, $w\in L_2\Leftrightarrow \int^a_0w(x)dx_2 = 0$. So, $\frac{d}{dx_1}\int^a_0w(x)\,dx_2 = 0 \Rightarrow \int^a_0w_{x_1}(x)\,dx_2 = 0 \Rightarrow w_x\in L_2$. Hence $\int_Sv_x(x)\overline{w_x(x, y)}dx_1dx_2 = 0$.\\\\ Now  for all $u\in W^1_2(S)$,  $u = v + w, \;\; v\in\mathcal{H}_1,\;\;w\in\mathcal{H}_2$ one has
\begin{eqnarray*}
\int_S \mid\nabla u(x)\mid^2\,dx &=& a\int_{\mathbb{R}}\mid v'(x_1)\mid^2\,dx_1 + \int_S\mid\nabla w(x)\mid^2\,dx\\ &+& \underbrace{\int_S\nabla v(x_1).\overline{\nabla w(x)}\,dx + \int_S\nabla w(x).\overline{\nabla v(x_1)}\,dx}_{= 0}\\ &=&a\int_{\mathbb{R}}\mid v'(x_1)\mid^2\,dx_1 + \int_S\mid\nabla w(x)\mid^2\,dx
\end{eqnarray*}and
\begin{eqnarray*}
\int_S V(x)\mid u(x)\mid^2\,dx &=& \int_SV(x)\mid v(x_1)\mid^2\,dx + \int_SV(x)\mid w(x)\mid^2\,dx \\&+& \int_S 2V(x)\textrm{Re}(v.w)dx\\&\le& 2\int_{\mathbb{R}}\widetilde{V}(x_1)\mid v(x_1)\mid^2\,dx_1 + 2\int_SV(x)\mid w(x)\mid^2\,dx\,
\end{eqnarray*} where
$$\widetilde{V}(x_1) = \frac{1}{a}\int^a_0V(x)dx_2\,.$$
So
\begin{eqnarray*}
&&\int_S \mid\nabla u(x)\mid^2 \,dx - \int_S V(x)\mid u(x)\mid^2 \,dx\\ &&\geq \int_\mathbb{R} \mid v'(x_1)\mid^2 \,dx_1 - 2\int_\mathbb{R} \widetilde{V}(x_1)\mid v(x_1)\mid^2\, dx_1 \\&&+ \int_S \mid\nabla w(x)\mid^2\, dx - 2\int_S V(x)\mid w(x)\mid^2\, dx\,.
\end{eqnarray*}  Hence
\begin{equation}\label{estimate}
N_-(q_{V, S}) \leq N_-(q_{1, 2\widetilde{V},\mathbb{R}}) + N_-(q_{2, 2V, S}),
\end{equation}where $q_{1, 2\widetilde{V},\mathbb{R}}$ and $q_{2, 2V, S}$ denote the restrictions of the form $q_{2V, S}$ to the spaces $\mathcal{H}_1$ and $\mathcal{H}_2$ respectively.\\Let
\begin{equation}\label{calAn}
\mathcal{G}_n := \int_{I_n} |x_1| \widetilde{V}(x_1)\, dx_1 , \ n \not= 0 , \ \ \ \mathcal{G}_0 := \int_{I_0} \widetilde{V}(x_1)\, dx_1 .
\end{equation} Then similarly to the estimate before (39) in \cite{Eugene} one has
\begin{equation}\label{Est1*}
N_-(q_{1, 2\widetilde{V},\mathbb{R}})\leq 1 + 7.61\sum_{\{n\in\mathbb{Z},\; \mathcal{G}_n > 0.046\}}\sqrt{\mathcal{G}_n}\,.
\end{equation}
In terms of the original potential $V$
\begin{eqnarray*}
\mathcal{G}_n = \int_{I_n} |x_1| \widetilde{V}(x_1)\, dx_1 &=& \int_{I_n}|x_1|\left(\int_0^a V(x)\,dx_2\right)\,dx_1\\ &=& \int_{I_n\times (0, a)}|x_1|V(x)\,dx
\end{eqnarray*} and
\begin{eqnarray*}
\mathcal{G}_0 = \int_{I_0} \widetilde{V}(x_1)\, dx_1 &=& \int_{I_0}\left(\int_0^a V(x)\,dx_2\right)\,dx_1\\ &=& \int_{I_0\times (0, a)}V(x)\,dx.
\end{eqnarray*}
It now remains to find an estimate for $N_-(q_{2, 2V, S})$ in \eqref{estimate}.\\

Let $S_n := J_n\times I ,\; n\in\mathbb{Z}$, where $J_n := (n, n + 1)$ and $I := (0, a)$.  Then the variational principle (see, e.g., \cite[Ch.6, $\S$ 2.1, Theorem 4]{Cour}) implies
\begin{equation}\label{estimate2}
N_-(q_{2, 2V, S})\leq \sum_{n\in\mathbb{Z}}N_-(q_{2, 2V, S_n}),
\end{equation}
where
\begin{eqnarray*}
&& q_{2, 2V, S_n}[w] := \int_{S_n} |\nabla w(x)|^2\, dx  -
2\int_{S_n} V(x) |w(x)|^2\, dx , \\
&& \mbox{Dom}\, (q_{2, 2V, S_n}) =
\left\{w\in W^1_2(S_n)\cap L^2\left(S_n, V(x)dx\right) : \int_{S_n}w(x)\, dx = 0\right\}.
\end{eqnarray*}

\begin{lemma}\label{lemma6}{\rm (cf. \cite[Lemma 7.8]{Eugene})}
There exists $C_{1} > 0$ such that
\begin{equation}\label{CLRl4est}
N_- (q_{2, 2V, S_n}) \le C_{1}
\|V\|_{L_1\left(J_{n}, L_{\mathcal{B}}(I)\right)} , \ \ \ \forall V \ge 0
\end{equation}(see \eqref{L1LlogLnorm}).
\end{lemma}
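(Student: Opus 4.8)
The plan is to follow the proof of \cite[Lemma 7.8]{Eugene}, which establishes exactly this estimate for cells in $\mathbb{R}^2$, and transplant it to the strip cell $S_n$. Since the form $q_{2,2V,S_n}$, the interval $I$, and the quantity $\mathcal{D}_n = \|V\|_{L_1(J_n,L_{\mathcal{B}}(I))}$ are all invariant under translation of $V$ in the $x_1$--direction, it suffices to prove \eqref{CLRl4est} for $n=0$, i.e. on the fixed rectangle $Q:=J_0\times I=(0,1)\times(0,a)$; the only features of $Q$ that the argument uses are that it is a bounded two--dimensional domain and a product of two fixed intervals, one of which, $J_0$, has unit length. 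First I would split $W^1_2(Q)$ orthogonally in the $x_1$--variable, mimicking the decomposition $W^1_2(S)=\mathcal{H}_1\oplus\mathcal{H}_2$ used above but with $x_1$ and $x_2$ interchanged: put $\mathcal{K}_1:=\{w\in W^1_2(Q):w(x)=w(x_2)\}$ and $\mathcal{K}_2:=\{w\in W^1_2(Q):\int_{J_0}w(x_1,x_2)\,dx_1=0\ \text{for a.e. }x_2\}$, so that $W^1_2(Q)=\mathcal{K}_1\oplus\mathcal{K}_2$ and the constraint $\int_Q w=0$ becomes ``$\int_I w=0$ on the $\mathcal{K}_1$--component, no constraint on the $\mathcal{K}_2$--component''. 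Writing $w=w^{(1)}+w^{(2)}$ accordingly, the kinetic parts are orthogonal and the cross term in the potential is dominated by $2V(|w^{(1)}|^2+|w^{(2)}|^2)$, so $q_{2,2V,Q}[w]\ge\widetilde{q}_1[w^{(1)}]+\widetilde{q}_2[w^{(2)}]$ with $\widetilde{q}_j[v]:=\int_Q|\nabla v|^2-4\int_Q V|v|^2$; the standard variational lemma (as in \cite{Eugene, BirSol}) then gives
\[
N_-(q_{2,2V,Q})\le N_-(\widetilde{q}_1)+N_-(\widetilde{q}_2),
\]
with $\widetilde{q}_1$, $\widetilde{q}_2$ restricted to $\mathcal{K}_1\cap\{\int_I w=0\}$ and to $\mathcal{K}_2$ respectively.

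The $\widetilde{q}_1$--term is elementary. Integrating out $x_1$ (recall $|J_0|=1$) shows that $\widetilde{q}_1$ is the form of the one--dimensional Neumann Schr\"odinger operator $-d^2/dx_2^2-4\overline{V}$ on $I$, where $\overline{V}(x_2):=\int_{J_0}V(x_1,x_2)\,dx_1$, restricted to the orthogonal complement of the constants. The restricted free operator has a spectral gap, so its inverse $\mathcal{R}_I$ is trace class with bounded diagonal $G_I(x_2,x_2)\le a/3$, and the Birman--Schwinger principle gives
\[
N_-(\widetilde{q}_1)=n_+\!\left(1,\,4\,\overline{V}^{1/2}\mathcal{R}_I\,\overline{V}^{1/2}\right)\le 4\int_0^a G_I(x_2,x_2)\,\overline{V}(x_2)\,dx_2\le\tfrac{4a}{3}\,\|\overline{V}\|_{L^1(I)}=\tfrac{4a}{3}\int_Q V.
\]
Since $\int_I V(x_1,x_2)\,dx_2\le\|V(x_1,\cdot)\|_{\mathcal{B},I}\,\|1\|_{(\mathcal{A}),I}$ by H\"older's inequality for the mutually complementary $N$--functions $\mathcal{A},\mathcal{B}$, integrating in $x_1$ yields $\int_Q V\le\|1\|_{(\mathcal{A}),I}\,\mathcal{D}_0$, hence $N_-(\widetilde{q}_1)\le C(a)\,\mathcal{D}_0$.

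The substantive part is $N_-(\widetilde{q}_2)\le C\,\mathcal{D}_0$, where the method of \cite[Lemma 7.8]{Eugene} is essential. On $\mathcal{K}_2$ one has the one--dimensional Poincar\'e inequality in $x_1$, $\|w\|_{L^2(Q)}^2\le\pi^{-2}\|\partial_{x_1}w\|_{L^2(Q)}^2$, hence $\|w\|_{W^1_2(Q)}^2\le(1+\pi^{-2})\|\nabla w\|_{L^2(Q)}^2$ there. One then combines the Birman--Schwinger principle (the spectral gap makes the relevant resolvent bounded) with two Orlicz--Sobolev ingredients: (i) slicing $\int_Q V|w|^2=\int_{J_0}\big(\int_I V(x_1,x_2)|w(x_1,x_2)|^2\,dx_2\big)\,dx_1$ and applying H\"older's inequality for $\mathcal{A},\mathcal{B}$ in the cross section, which produces the factor $\|V(x_1,\cdot)\|_{\mathcal{B},I}$ together with $\|\,|w(x_1,\cdot)|^2\|_{(\mathcal{A}),I}$; and (ii) the exponential--integrability (Trudinger--Moser type) Sobolev embedding on the two--dimensional cell $Q$, used to control $\operatorname{ess\,sup}_{x_1\in J_0}\|\,|w(x_1,\cdot)|^2\|_{(\mathcal{A}),I}$ by $C\|w\|_{W^1_2(Q)}^2$. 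Feeding these into the Birman--Schwinger estimate and invoking the spectral gap on $\mathcal{K}_2$ gives $N_-(\widetilde{q}_2)\le C\int_{J_0}\|V(x_1,\cdot)\|_{\mathcal{B},I}\,dx_1=C\,\mathcal{D}_0$, precisely as in \cite[Lemma 7.8]{Eugene}; together with the bound on $N_-(\widetilde{q}_1)$ this proves \eqref{CLRl4est} with $C_1=C_1(a)$, and translation invariance extends it to all $n$. I expect the main obstacle to be ingredient (ii): in two dimensions there is no continuous embedding $W^1_2(Q)\hookrightarrow L^\infty(Q)$, so $\int_Q V|w|^2$ cannot be bounded by $\|V\|_{L^1}\|w\|_{L^\infty}^2$; the substitute is the exponential Orlicz norm, and the delicate point, inherited verbatim from \cite{Eugene}, is the uniform--in--$x_1$ cross--sectional estimate $\operatorname{ess\,sup}_{x_1}\|\,|w(x_1,\cdot)|^2\|_{(\mathcal{A}),I}\le C\|w\|_{W^1_2(Q)}^2$, since the slice integral $\int_I\mathcal{A}(|w(x_1,x_2)|^2/C)\,dx_2$ is not controlled by the double integral over $Q$ and one must trade the $x_1$--smoothness of $w$ against it. This is exactly what makes the anisotropic norm $\mathcal{D}_n=\|V\|_{L_1(J_n,L_{\mathcal{B}}(I))}$ — rather than a coarser quantity such as $\|V\|_{L\ln L(S_n)}$ — the right object on the right--hand side, and is the reason the $N$--functions $\mathcal{A},\mathcal{B}$ and the averaged Orlicz norm $\|\cdot\|^{\mathrm{(av)}}$ are brought in from \cite{Sol, Eugene}.
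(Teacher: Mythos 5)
Your overall plan---transplanting the machinery of \cite[Lemmas 7.6--7.8]{Eugene} to the cell $S_n$---is in the same spirit as the paper, whose proof is much shorter: it quotes \cite[Lemma 7.7]{Eugene} to get $N_-(q_{2,2V,S_n})\le d_1\|V\|_{L_1(J_n,L_{\mathcal{B}}(\mathbb{I}))}+1$, removes the additive $1$ by a Poincar\'e-type form bound (as in (62) of \cite{Eugene}): if the norm is at most $1/d_2$ the form is nonnegative, so for larger norms the $1$ is absorbed into $d_2\|V\|$, and finally rescales $x_2\mapsto ax_2$ to pass from the unit interval to $I=(0,a)$. Your reduction to $n=0$ by translation invariance and your treatment of the $\mathcal{K}_1$-component (one-dimensional Birman--Schwinger with the bounded Green's function diagonal of the gap-restricted Neumann operator, then Orlicz--H\"older to reach $\mathcal{D}_0$) are correct, although the paper never needs this extra splitting.

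The genuine gap is in the substantive step $N_-(\widetilde q_2)\le C\,\mathcal{D}_0$, which is not delivered by the mechanism you describe. The sliced H\"older inequality together with the uniform cross-sectional estimate $\operatorname{ess\,sup}_{x_1}\bigl\||w(x_1,\cdot)|^2\bigr\|_{(\mathcal{A}),I}\le C\|w\|^2_{W^1_2(Q)}$ yields only the form bound $\int_Q V|w|^2\le C\,\mathcal{D}_0\,\|\nabla w\|^2_{L^2(Q)}$ on $\mathcal{K}_2$, i.e.\ $N_-(\widetilde q_2)=0$ when $\mathcal{D}_0$ is small; it gives no count at all when $\mathcal{D}_0$ is large. ``Feeding this into the Birman--Schwinger principle'' does not help: boundedness of the Birman--Schwinger operator controls only its norm, hence only the absence of negative spectrum, and no trace argument of the kind you use for $\widetilde q_1$ is available in two dimensions, because the Green's function of the gap-restricted Laplacian on $Q$ has a logarithmically divergent diagonal, so $V^{1/2}A^{-1}V^{1/2}$ is never trace class for nontrivial $V$. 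The counting mechanism actually used in \cite[Lemmas 7.6--7.7]{Eugene} (going back to \cite{Sol}) is a bracketing/covering argument: partition the cell into roughly $\mathcal{D}_0$ sub-rectangles on each of which the local $L_1(L_{\mathcal{B}})$ (averaged Orlicz) norm is small, impose mean-zero conditions on each piece (a finite-rank constraint whose rank is proportional to $\mathcal{D}_0$), and apply the local form bound on each piece; this is exactly the device the present paper re-uses in Section 4, cf.\ \eqref{eq1}. That step is absent from your sketch, and your description of \cite[Lemma 7.8]{Eugene} as ``Birman--Schwinger plus a sliced Trudinger--Moser trace estimate'' misrepresents it. Note finally that the bracketing inevitably produces a bound of the form $C\mathcal{D}_0+1$ (as in \eqref{d}); to reach \eqref{CLRl4est} without the additive constant you still need the small-norm dichotomy via Poincar\'e---precisely the $d_2$-step of the paper---which your proposal never addresses.
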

\begin{proof}
Let $I= \mathbb{I}$, the unit interval. Then it follows from Lemma \cite[Lemma 7.7]{Eugene} that there is a constant $d_1> 0$ such that
\begin{equation}\label{d}
N_- (q_{2, 2V, S_n}) \le d_{1}
\|V\|_{L_1\left(J_{n}, L_{\mathcal{B}}(\mathbb{I})\right)} + 1 , \ \ \ \forall V \ge 0.
\end{equation}
Similarly to $(62)$ in \cite{Eugene} and using the Poincar\'{e} inequality (see, e.g., \cite[1.1.11]{Maz}), there is a constant $d_2 >0$ such that
$$
2\int_{S_n} V(x) |w(x)|^2\, dx \le d_2 \|V\|_{L_1\left(J_{n}, L_{\mathcal{B}}(\mathbb{I})\right)}\int_{S_n} |\nabla w(x)|^2\, dx
$$
for all $w\in W^1_2(S_n)\cap L^2\left(S_n, V(x)dx\right)$ such that $\int_{S_n}w(x)\, dx = 0$. If $\|V\|_{L_1\left(J_{n}, L_{\mathcal{B}}(\mathbb{I})\right)} \le \frac{1}{d_2}$, then $N_- (q_{2, 2V, S_n}) = 0$. Otherwise \eqref{d} implies
$$
N_- (q_{2, 2V, S_n}) \le C_{1}
\|V\|_{L_1\left(J_{n}, L_{\mathcal{B}}(\mathbb{I})\right)} , \ \ \ \forall V \ge 0,
$$
where $C_1 := d_1 + d_2$. Hence \eqref{CLRl4est} follows by the scaling $x_2 \longrightarrow a x_2$.
\end{proof}

\textbf{Proof of Theorem \ref{thm1}}
 \begin{proof}
 If $\mathcal{D}_n < \frac{1}{C_{1}}$ , then  $N_-(q_{2, 2V, S_n}) = 0$ and one can drop this term from the sum \eqref{estimate2}. Hence for any $c < \frac{1}{C_{1}}$ , \eqref{estimate2}  and Lemma \ref{lemma6} imply that
$$
N_-(q_{2, 2V, S})\leq C_{1}\sum_{\{n\in\mathbb{Z}, \mathcal{D}_n > c\}}\mathcal{D}_n  \;\;\;\;\; \; \forall V \geq 0.
$$ This together with \eqref{estimate} and \eqref{Est1*} imply  \eqref{Gest2}.
\end{proof}

One can easily show that \eqref{Gest2} is an improvement of the estimates obtained  by A. Grigor'yan and N. Nadirashvili  \cite[Theorem 7.9]{Grig} with a different $c$ and that \eqref{Gest2} is strictly sharper. Indeed, let
$B_{n}  := \parallel V\parallel_{\mathcal{B}, S_n}$. Then it follows from the embedding $L^p(S_n)\hookrightarrow L_{\mathcal{B}}(S_n)$ that there is a constant $C(p),\; p > 1$ such that
$$
B_{n}  = \parallel V\parallel_{\mathcal{B}, S_n} \le C(p)\left(\int_{S_n}V(x)^p\,dx\right)^{\frac{1}{p}} = C(p)b_n(V),
$$ where $b_n(V) := \left(\int_{S_n}V(x)^p\,dx\right)^{\frac{1}{p}}$
(see \cite[Remark 6.3]{Eugene}).
Now it follows from the known theory of embeddings of mixed-norm Orlicz spaces (see, e.g., \cite{bo, fi}) that $$\mathcal{D}_n \le C(p)b_n(V).$$
Hence
\begin{equation}\label{equivnad}
N_- (q_{2, 2V, S_n}) \le C_{2} b_n(V) , \ \ \ \forall V \ge 0\,,
\end{equation}where $C_{2} := C_{1}C(p)$.
The scaling $V \longmapsto t V,\; t > 0$, allows one to extend the above inequality to an arbitrary $V \geq 0$. Thus for any $c < \frac{1}{C_{2}}$, \eqref{Gest2} implies \cite[Theorem 7.9]{Grig}.\\\\
Next we will discuss different  forms of \eqref{Gest2}.
\begin{remark}
{\rm Note that
\begin{equation}\label{weak}
\sum_{\{n\in\mathbb{Z}, \mathcal{G}_n > c\}}\sqrt{\mathcal{G}_n} \leq \frac{2}{\sqrt{c}}\parallel(\mathcal{G}_n)_{n\in\mathbb{Z}}\parallel_{1,w}
\end{equation}
(see (49) in \cite{Eugene}).
Estimate \eqref{Gest2} implies the following estimate
\begin{equation}\label{Est2}
N_-(q_{V, S})\leq 1 + C_{3}\left( \parallel(\mathcal{G}_n)_{n\in\mathbb{Z}}\parallel_{1,w} + \|V\|_{L_1\left(\mathbb{R}\;, L_{\mathcal{B}}(I)\right)}\right), \;\;\;\ \forall V \geq 0.
\end{equation}
This follows from \eqref{weak}
 and
 $$
\sum_{\{n\in\mathbb{Z}, \mathcal{D}_n > c\}}\mathcal{D}_n \leq \sum_{n\in\mathbb{Z} }\mathcal{D}_n = \int_\mathbb{R}\parallel V(x_1, .)\parallel_{\mathcal{B}, I}dx_1 = \|V\|_{L_1\left(\mathbb{R}, L_{\mathcal{B}}(I)\right)}.
$$
Equation \eqref{Est2} in turn implies the following
\begin{equation}\label{Est3}
N_-(q_{V, S})\leq 1 + C_{4}\left( \parallel(\mathcal{G}_n)_{n\in\mathbb{Z}}\parallel_{1,w} + \int_{\mathbb{R}}\left(\int_{I}\mid V(x)\mid^p dx_2\right)^{\frac{1}{p}}dx_1\right), \;\;\;\ \forall V \geq 0,
\end{equation}
which is equivalent to

\begin{equation}\label{Est4}
N_-(q_{V, S})\leq 1 + C_{5}\left( \parallel(\mathcal{G}_n)_{n\in\mathbb{Z}}\parallel_{1,w} + \int_{\mathbb{R}}\left(\int_{I}\mid V_\ast(x )\mid^p dx_2\right)^{\frac{1}{p}}dx_1\right), \;\;\;\ \forall V \geq 0,
\end{equation}
where $V_{\ast}(x) = V(x) - \widetilde{V}(x_1), \;\; \widetilde{V}(x_1) = \frac{1}{a}\int^a_0 V(x)dx_2$. \\Indeed,
\begin{eqnarray*}
&&\left|\int_{\mathbb{R}}\left(\int_{I}\mid V(x)\mid^p dx_2\right)^{\frac{1}{p}}dx_1 - \int_{\mathbb{R}}\left(\int_{I}\mid V_\ast(x)\mid^p dx_2\right)^{\frac{1}{p}}dx_1\right| \\&\leq& \int_{\mathbb{R}}\left(\int_{I}\mid \widetilde{V}(x_1)\mid^p dx_2\right)^{\frac{1}{p}}dx_1 = a^{\frac{1}{p}}\int_{\mathbb{R}}\widetilde{V}(x_1)dx_1\\ &=& a^{\frac{1}{p}}\sum_{n\in\mathbb{Z}}\int_{I_n\times (0, a)}V(x)dx \le  a^{\frac{1}{p}}\sum_{n\in\mathbb{Z}}2^{-|n| + 1}\mathcal{G}_n\\&\leq&\textrm{const}\; \underset{{n\in\mathbb{Z}}}\sup\, \mathcal{G}_n \leq \textrm{const}\parallel(\mathcal{G}_n)_{n\in\mathbb{Z}}\parallel_{1,w}.
\end{eqnarray*}
Thus \eqref{Est3} and \eqref{Est4} are equivalent.\\\\Similarly,
\begin{eqnarray*}
&&\left| \|V\|_{L_1\left(\mathbb{R}, L_{\mathcal{B}}(I)\right)} - \|V_{\ast}\|_{L_1\left(\mathbb{R}, L_{\mathcal{B}}(I)\right)}\right|\\ &&= \left| \int_{\mathbb{R}}\parallel V(x_1, .)\parallel_{\mathcal{B},I}dx_1 - \int_{\mathbb{R}}\parallel V_{\ast}(x_1, .)\parallel_{\mathcal{B},I}dx_1\right|\\&&\le \int_{\mathbb{R}}\|\widetilde{V}(x_1)\|_{\mathcal{B}, I}dx_1 = \textrm{const}\int_{\mathbb{R}}|\widetilde{V}(x_1)|\,dx_1 \\&&\le \textrm{const}\; \underset{{n\in\mathbb{Z}}}\sup \,\mathcal{G}_n \\&& \leq \textrm{const}\parallel(\mathcal{G}_n)_{n\in\mathbb{Z}}\parallel_{1,w}.
\end{eqnarray*}
Hence \eqref{Est2} is equivalent to the following estimate
\begin{equation}\label{Est5}
N_-(q_{V, S})\leq 1 + C_{6}\left( \parallel(\mathcal{G}_n)_{n\in\mathbb{Z}}\parallel_{1,w} + \|V_{\ast}\|_{L_1\left(\mathbb{R}, L_{\mathcal{B}}(I)\right)}\right), \;\;\;\ \forall V \geq 0.
\end{equation}
Note the last term in right hand side of \eqref{Est5} (and \eqref{Est4}) drops out if $V$ does not depend on $x_2$.
}
\end{remark}
Let $\alpha > 0$ be given. It is well known that the lowest possible (semi-classical) rate of growth of $N_-(q_{\alpha V, S})$ is
$$
N_-(q_{\alpha V, S}) = O(\alpha) \mbox{ as } \alpha \longrightarrow  +\infty
$$
(see, e.g., \cite{BL,LM}).
It turns out that the finiteness of the first term in \eqref{Gest2} is necessary for $N_-(q_{\alpha V, S}) = O(\alpha) \mbox{ as } \alpha \longrightarrow  +\infty$ to hold (see next Theorem).
\begin{theorem}
Let $V \ge 0$. If $N_-(q_{\alpha V, S}) = O(\alpha) \mbox{ as } \alpha \longrightarrow  +\infty$, then $\|G_n\|_{1,w} < \infty$.
\begin{proof}
Consider the function
$$
w_n(x_1) := \left\{\begin{array}{cl}
  0 ,   & \  x_1 \le 2^{n-1}\,\textrm{ or }\, x_1\ge 2^{n+2} , \\ \\
  4(x_1 - 2 ^{n-1}) ,  & \ 2^{n-1} <  x_1 < 2^{n }  , \\ \\
 2^{n+1} , & \ 2^{n} \le x_1 \le  2^{n+1}, \\ \\
 2^{n+2} - x_1  ,  & \ 2^{n+1} <  x_1 < 2^{n+2}\,,
\end{array}\right.
$$
 for $n >0$. Let $u_n(x) = w_n(x_1)u_1(x_2)$, where $u_1$ is the first eigenfunction of the one-dimensional second order differential operator on $L^2((0, a))$ which is identically equal to $1$. Then  we have
\begin{eqnarray*}
\int_{S}|\nabla u_n(x)|^2\,dx &=& a\int_{\mathbb{R}}|w'_n(x_1)|^2dx_1\\&=&a\left( \int_{2^{n-1}}^{2^n}|w'_n(x_1)|^2dx_1 + \int_{2^{n}}^{2^{n+1}}|w'_n(x_1)|^2dx_1+\int_{2^{n+1}}^{2^{n+2}}|w'_n(x_1)|^2dx_1\right)\\ &=& C_72^{n+1}\,,
\end{eqnarray*}where
$$
C_{7} := 5a.
$$
Now
\begin{eqnarray*}
\int_S V(x)|u_n(x)|^2\,dx &\ge& \int_{2^n}^{2^{n+1}}2^{2(n+1)}\,dx_1\int_0^a V(x)\,dx_2\\ &\ge&\int_{2^n}^{2^{n+1}}2^{n+1}\,dx_1\int_0^a|x_1| V(x)\,dx_2\\ &=& \frac{1}{C_{7}}\left(\int_S|\nabla u_n(x)|^2 dx\right)\mathcal{G}_n\,.
\end{eqnarray*}
If $\mathcal{G}_n > C_{7}$, then $q_{V, S}[u_n] < 0$. The auxiliary functions $w_n$ can be defined similarly for $n \le 0$. Since $u_n$ and $u_k$ have disjoint supports for $|n - k| \ge 3$, then
$$
N_-\left(q_{V, S}\right) \ge \frac{1}{3}\textrm{card}\{n\in\mathbb{Z}\,:\, \mathcal{G}_n > C_{7}\}
$$ (see \cite[Theorem 9.1]{Eugene}). If for some constant $C_8>0$, $N_-\left(q_{\alpha V, S}\right) \le C_8\alpha$, then
$$
\frac{1}{3}\textrm{card}\{n\in\mathbb{Z}\,:\, \alpha \mathcal{G}_n > C_{7}\} \le C_8\alpha\,,
$$ and so
$$
\textrm{card}\left\{n\in\mathbb{Z}\,:\, \mathcal{G}_n > \frac{C_{7}}{\alpha}\right\} \le 3C_8\alpha\,.
$$ With $s = \frac{C_{7}}{\alpha}$ we have
$$
\textrm{card}\{n\in\mathbb{Z}\,:\, \mathcal{G}_n > s\} \le C_{9}s^{-1},\,\,\,s > 0,
$$ where $C_{9}:= 3 C_{7}C_8$.

\end{proof}
\end{theorem}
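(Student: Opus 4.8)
The goal is to establish the weak-$\ell_1$ bound for $(\mathcal{G}_n)_{n\in\mathbb Z}$, i.e. to produce a constant $C_9$ with $\mathrm{card}\{n:\mathcal G_n>s\}\le C_9 s^{-1}$ for every $s>0$, starting from a hypothetical estimate $N_-(q_{\alpha V,S})\le C_8\alpha$. The plan is to construct, for each dyadic scale, an explicit trial function that exhibits a negative direction of $q_{V,S}$ precisely when the corresponding $\mathcal G_n$ exceeds a fixed threshold, and then to combine these trial functions via the variational characterisation \eqref{negative} together with a scaling argument.

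First I would construct, for $n\in\mathbb Z$, a piecewise-linear ``plateau'' function $w_n=w_n(x_1)$ supported on a dyadic annulus (roughly $2^{|n|-1}<|x_1|<2^{|n|+2}$, with the obvious simplification near $n=0$), equal to a constant of order $2^{|n|}$ on a central dyadic interval $P_n$ and interpolating linearly down to $0$ on the two flanking intervals, and then set $u_n(x)=w_n(x_1)u_1(x_2)$ where $u_1\equiv 1$ is the (constant) ground state of the Neumann Laplacian on $(0,a)$. Then $u_n\in\mathcal H_1\subset\mathrm{Dom}(q_{V,S})$, and since $u_n$ depends on $x_1$ only, $\int_S|\nabla u_n|^2\,dx=a\int_{\mathbb R}|w_n'|^2\,dx_1$; the three contributing pieces each give a term comparable to $2^{|n|}$, so this energy equals $C_7\,2^{|n|+1}$ for an explicit constant $C_7$ (one finds $C_7=5a$). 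For the potential term I would discard everything outside $P_n\times(0,a)$, where $|u_n|^2$ is comparable to $2^{2|n|}$ while $|x_1|\lesssim 2^{|n|}$, and estimate
\begin{equation*}
\int_S V(x)|u_n(x)|^2\,dx \ \ge\ 2^{|n|+1}\int_{P_n\times(0,a)} |x_1|\,V(x)\,dx \ =\ \frac{1}{C_7}\Big(\int_S|\nabla u_n(x)|^2\,dx\Big)\,\mathcal G_n ,
\end{equation*}
the final equality being the definition of $\mathcal G_n$ (up to a harmless shift of the index). Hence $q_{V,S}[u_n]<0$ whenever $\mathcal G_n>C_7$.

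Since $u_n$ and $u_k$ have disjoint supports as soon as $|n-k|\ge 3$, splitting $\mathbb Z$ into its three residue classes modulo $3$ shows that the $u_n$ with $\mathcal G_n>C_7$ span a subspace of $\mathrm{Dom}(q_{V,S})$ on which $q_{V,S}$ is negative definite, of dimension at least $\tfrac13\,\mathrm{card}\{n:\mathcal G_n>C_7\}$; by \eqref{negative} this gives $N_-(q_{V,S})\ge\tfrac13\,\mathrm{card}\{n:\mathcal G_n>C_7\}$. Replacing $V$ by $\alpha V$ turns $\mathcal G_n$ into $\alpha\mathcal G_n$, so the hypothesis yields $\mathrm{card}\{n:\mathcal G_n>C_7/\alpha\}\le 3C_8\alpha$ for every $\alpha>0$; putting $s=C_7/\alpha$ and letting $\alpha$ range over $(0,\infty)$ gives $\mathrm{card}\{n:\mathcal G_n>s\}\le C_9\,s^{-1}$ with $C_9=3C_7C_8$, which is exactly $\|(\mathcal G_n)_{n\in\mathbb Z}\|_{1,w}\le C_9<\infty$.

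I expect the only delicate point to be the potential-term lower bound: the plateau height of $w_n$ must be chosen so that its square simultaneously absorbs the weight $|x_1|$ in $\mathcal G_n$ and dominates the Dirichlet energy $\asymp 2^{|n|}$, and the annulus supporting $w_n$ must be placed so that the plateau genuinely covers the dyadic interval entering $\mathcal G_n$, with $n=0$ treated separately because of the different definition of $\mathcal G_0$. The remaining ingredients — that $u_n$ lies in the form domain, the Dirichlet energy computation, the disjoint-support/orthogonality counting, and the final scaling — are routine.
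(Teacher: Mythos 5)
Your proposal is correct and follows essentially the same route as the paper: the same dyadic plateau functions $w_n(x_1)$ times the constant transverse ground state, the same Dirichlet-energy constant $C_7=5a$, the same lower bound on the potential term giving $q_{V,S}[u_n]<0$ when $\mathcal{G}_n>C_7$, the same disjoint-support counting with factor $\tfrac13$, and the same scaling $V\mapsto\alpha V$ yielding $\mathrm{card}\{n:\mathcal{G}_n>s\}\le 3C_7C_8\,s^{-1}$. Your remark about the index shift (the plateau sits over $[2^{n},2^{n+1}]$ while $\mathcal{G}_n$ is taken over $I_n=[2^{n-1},2^{n}]$) is the only point where you are, if anything, slightly more careful than the paper's own write-up.
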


\section{Estimates involving norms of the potential supported by a Lipschitz curve inside a strip}\label{singular}
In this section we obtain estimates analogous to those in the previous section when the potential $V$ is strongly singular, i.e., when $V$ is supported by a Lipschitz curve $\ell$ embedded in $S$. When dealing with function spaces on $\ell$, we will always assume that $\ell$ is equipped with the arc length measure. Before we introduce the estimates, let us first look at the following operator that we shall need in the sequel:\\\\ Consider a one-dimensional Schr\"odinger operator $H_{X, \alpha}$, with point $\delta$-interactions on a countable set $X = \{x_k\}_{k =1}^{\infty}$ of points, called points of interaction and intensities $\alpha = \{\alpha_k\}_{k = 1}^{\infty}$ , defined by the differential expression $-\frac{d^2}{dx^2}$ on  functions $w(x)$ that belong to the Sobolev space $W^2_2(\mathbb{R}\setminus X)$ satisfying, in the points of the set $X$, the following conjugation conditions:
\begin{equation}\label{d1}
w(x_k + 0) = w(x_k - 0),\;\;\; w'(x_k + 0) - w'(x_k - 0) = \alpha_k w(x_k).
\end{equation}
Since for each $k$, $(x_k, x_{k+1})$ is an open interval, then any function in $W^2_2((x_k, x_{k+1}))$ and its derivative have well defined (one-sided) values at the end-points.
The operator $H_{X, \alpha}$ has the following representation
\begin{equation}\label{d2}
H_{X,\alpha} := -\frac{d^2}{dx^2} + \sum_{k = 1}^{\infty}\alpha_k \delta(x - x_k),
\end{equation}
where $\delta$ is the Dirac's delta distribution. We shall assume that $H_{X,\alpha}$ is self-adjoint (see, e.g., \cite{Alv}) in case the set $X$ is finite. One can also define the operator \eqref{d2} via its quadratic form  as follows
\begin{equation}\label{d3}
q[w] := \int_{\mathbb{R}}|w'(x)|^2\,dx - \sum_{k =1}^{\infty}\alpha_k|w(x_k)|^2, \;\;\;\forall w\in W^1_2(\mathbb{R}).
\end{equation}
\begin{lemma}\label{lemma1}
{\rm  Given an infinite sequence of positive numbers $(\alpha_k)$, there is a sequence of points $(x_k)$ in $X$ such that  \eqref{d2} has infinitely many negative eigenvalues.}
\end{lemma}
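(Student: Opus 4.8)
The plan is to produce, for a suitably spread-out interaction set $X=\{x_k\}$, an infinite-dimensional subspace of $W^1_2(\mathbb{R})$ on which the quadratic form \eqref{d3} is negative definite; by the variational characterisation of the counting function (as in \eqref{negative}) this yields that $H_{X,\alpha}$ has infinitely many negative eigenvalues. The mechanism is the familiar one: a single attractive point interaction supports a bound state, and if the points $x_k$ are placed far enough apart these bound states ``decouple'', so infinitely many of them persist.

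First I would record the elementary one-centre estimate. For $\alpha>0$ and $r>0$ the tent function $\phi(x):=\bigl(1-|x|/r\bigr)_{+}$ belongs to $W^1_2(\mathbb{R})$, is supported in $[-r,r]$, satisfies $\phi(0)=1$ and $\int_{\mathbb{R}}|\phi'|^2\,dx=2/r$, so that
\[
\int_{\mathbb{R}}|\phi'|^2\,dx-\alpha|\phi(0)|^2=\frac{2}{r}-\alpha<0\qquad\text{as soon as } r>\frac{2}{\alpha}.
\]
Accordingly I set $r_k:=3/\alpha_k$, define the points recursively by $x_1:=0$ and $x_{k+1}:=x_k+r_k+r_{k+1}+1$, and put $\phi_k(x):=\bigl(1-|x-x_k|/r_k\bigr)_{+}$. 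Then $\mathrm{supp}\,\phi_k\subseteq[x_k-r_k,\,x_k+r_k]$, consecutive such intervals are separated by a gap of length $1$ and hence are pairwise disjoint, $x_k\to+\infty$, and $\inf_k|x_{k+1}-x_k|\ge 1>0$, so $X$ is a legitimate infinite discrete set.

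The crux is that both terms of the form \eqref{d3} are local, so disjointness of supports kills all cross terms: for any finitely supported scalar sequence $(c_k)$ one gets
\[
q\Bigl[\sum_k c_k\phi_k\Bigr]=\sum_k|c_k|^2\Bigl(\int_{\mathbb{R}}|\phi_k'|^2\,dx-\alpha_k|\phi_k(x_k)|^2\Bigr)=\sum_k|c_k|^2\Bigl(\frac{2}{r_k}-\alpha_k\Bigr)=-\frac13\sum_k\alpha_k|c_k|^2,
\]
which is $<0$ unless every $c_k=0$; note that on a compactly supported function only finitely many terms of the sum in \eqref{d3} are nonzero, so the form is unambiguously defined here. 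Since functions with pairwise disjoint supports are linearly independent, $\mathrm{span}\{\phi_k\}_{k\ge 1}$ is an infinite-dimensional subspace of $W^1_2(\mathbb{R})=\mathrm{Dom}(q)$ on which $q$ is negative definite, and therefore $N_-(q)=\infty$.

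I do not anticipate a genuine obstacle: this is a soft decoupling argument and the main content is the construction itself. The only points needing attention are the bookkeeping of widths and spacings (choosing $r_k>2/\alpha_k$ and then separating the tents, which simultaneously guarantees $\inf_k|x_{k+1}-x_k|>0$ and makes $X$ an honest infinite discrete set), and -- if one wishes to read ``infinitely many negative eigenvalues'' in the strict operator-theoretic sense rather than as the variational statement $N_-(q)=\infty$ -- invoking the self-adjointness of $H_{X,\alpha}$ for such uniformly separated $X$, which is standard once the geometric setup is fixed.
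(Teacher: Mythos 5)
Your proposal is correct and follows essentially the same route as the paper: an infinite family of disjointly supported trial functions, each localized at one interaction point and wide enough (width exceeding a multiple of $1/\alpha_k$) that the single-centre form is negative, so the form \eqref{d3} is negative definite on their infinite-dimensional span and $N_-(q)=\infty$. The only cosmetic differences are that you use explicit tent functions with a recursively defined, uniformly separated $X$, whereas the paper scales a fixed smooth bump $\psi$ to increasing gaps $x_k-x_{k-1}>\frac{2}{\alpha_k}\int|\psi'|^2\,dt$.
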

\begin{proof}
Let $\psi \in C_0^{\infty}(\mathbb{R})$ such that
\begin{eqnarray*}
& \psi(x) = \left\{\begin{array}{l}
  1 \ \mbox{ if } |x| < \frac{1}{2},   \\ \\
   0 \  \mbox{ if } |x| \geq 1  .
\end{array}\right.
\end{eqnarray*}
Assume that $x_{k + 1} - x_k > x_{k} - x_{k-1},$ for all $k = 1, 2, ...$ and let \\$\varphi_k(x) := \psi\left( \frac{2}{x_k - x_{k-1}}(x - x_k)\right)$. Then
\begin{eqnarray*}
& \varphi_k(x) = \left\{\begin{array}{l}
  1 \ \mbox{ if } |x - x_k| < \frac{x_k - x_{k-1}}{4} ,   \\ \\
   0 \  \mbox{ if } |x - x_k| \geq \frac{x_k - x_{k-1}}{2}  .
\end{array}\right.
\end{eqnarray*}
Let $\mathcal{L}$ be a linear subspace of $W^1_2(\mathbb{R})$ defined by
$$
\mathcal{L} := \left\{ w\in W^1_2(\mathbb{R})\; :\; w = \sum_{k = 1}^{\infty}a_k\varphi_k,\;a_k\in\mathbb{C}\right\}.
$$
Since $\varphi_k$ and $\varphi_j$ for $k \neq j$ have disjoint supports, then dim$\mathcal{L} = \infty$. Thus for all $w\in \mathcal{L}\setminus\{0\}$, it follows from \eqref{d3} that
\begin{eqnarray*}
q[w] &=& \sum_{k = 1}^{\infty}|a_k|^2\left(\int_{\mathbb{R}}|\varphi'_k(x)|^2\,dx - \alpha_k\right)\\
&=&\sum_{k = 1}^{\infty}|a_k|^2\left(\frac{4}{|x_k - x_{k-1}|^2}\int_{\mathbb{R}}|\psi'\left( \frac{2}{x_k - x_{k-1}}(x - x_k)\right)|^2\,dx - \alpha_k\right)\\
&=&\sum_{k = 1}^{\infty}|a_k|^2\left(\frac{2}{|x_k - x_{k-1}|}\int_{\mathbb{R}}|\psi'(t)|^2\,dt - \alpha_k\right),
\end{eqnarray*}where $ t = \frac{2}{x_k - x_{k-1}}(x - x_k)$. Take $x_k$ such that
$$
x_k - x_{k-1} > \frac{2}{\alpha_k}\int_{\mathbb{R}}|\psi'(t)|^2\,dt,
$$then $q[w] < 0$ and the operator \eqref{d2} has infinitely many negative eigenvalues.

\end{proof}
Let us now return to the operator \eqref{self-adj} with $V$ supported by and locally integrable on a Lipschitz curve $\ell$ embedded in $S$. Let
\begin{eqnarray*}
q_{V,\ell}[u] &:=& \int_S |\nabla u(x)|^2dx - \int_{\ell}V(x)|u(x)|^2\, ds(x)\\
\textrm{Dom}(q_{V,\ell})& =& W^1_2(S) \cap L^2(\ell, Vds).
\end{eqnarray*}
Let $\{x_1^{(k)}\}, \,k\in\mathbb{N}$ be a sequence of points on $\mathbb{R}$ satisfying conditions in \eqref{d1}. Define
$$
\gamma_k := \ell \cap \left(\{x^{(k)}_1\}\times (0, a)\right),\;\;x_1\in\mathbb{R}, \;\;k\in\mathbb{N}
$$
and
$$
\Sigma := \left\{x^{(k)}_1 \;:\; |\gamma_k| > 0\right\}.
$$
Then the set $\Sigma$ is at most countable. Let $I$ be an arbitrary interval in $\mathbb{R}$ and let
$$
\nu(I) := \int_{\ell\cap \left(I\times(0, a)\right)}V(x)\,ds(x).
$$
Furthermore, let
\begin{eqnarray*}
\mathcal{F}_n &:=& \int_{I_n} |x_1| \,d\nu(x_1) , \ n \not= 0 , \ \ \ F_0 := \int_{I_0} d\nu(x_1)\,\,\,\, (cf. \eqref{calAn}),\\
\ell_n &:=& \ell \cap S_n \,,\,\,S_n := (n, n+1)\times (0, a),\,\,\,n\in\mathbb{N},\\
\mathcal{C}_n &:=& \|V\|^{(\textrm{av})}_{\mathcal{B}, \ell_n}.
\end{eqnarray*}
\begin{theorem}
Suppose that $N$ is the cardinality of  $\Sigma$. Then there exist constants $c_1, c_2, C_{10}, C_{11} > 0$ such that
\begin{equation}\label{Gest3}
N_-(q_{V, \ell})\leq 1 + N + C_9\sum_{\{n\in\mathbb{Z}, \mathcal{F}_n > c_1\}}\sqrt{\mathcal{F}_n} + C_{10}\sum_{\{n\in\mathbb{Z}, \mathcal{C}_n > c_2\}}\mathcal{C}_n, \;\;\;\;\forall V\geq 0.
\end{equation}
If $\Sigma$ is infinite, then $N_-(q_{V, \ell}) = \infty$.
\end{theorem}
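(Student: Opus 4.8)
The overall strategy will mirror the one used for Theorem \ref{thm1}: split $W^1_2(S)$ into the two orthogonal pieces $\mathcal{H}_1$ and $\mathcal{H}_2$ from \eqref{H}, estimate the form on each piece separately, and add the bounds. Writing $u = v + w$ with $v \in \mathcal{H}_1$ and $w \in \mathcal{H}_2$, the gradient term again decouples, and on the curve one has $\int_\ell V|u|^2\,ds \le 2\int_\ell V|v|^2\,ds + 2\int_\ell V|w|^2\,ds$. Since $v = v(x_1)$, the quantity $\int_\ell V(x)|v(x_1)|^2\,ds(x)$ equals $\int_{\mathbb{R}}|v(x_1)|^2\,d\nu(x_1)$, so the form restricted to $\mathcal{H}_1$ is (twice) that of a one-dimensional Schr\"odinger operator on $L^2(\mathbb{R})$ with the measure-valued potential $\nu$. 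Consequently $N_-(q_{V,\ell}) \le N_-(q_{1,2\nu,\mathbb{R}}) + N_-(q_{2,2V,\ell})$ where the first term is a one-dimensional problem and the second is the $\mathcal{H}_2$-part.

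For the one-dimensional term I would invoke exactly the estimate "before (39) in \cite{Eugene}" that gave \eqref{Est1*}, but now applied to the measure $\nu$ rather than to $\widetilde{V}\,dx_1$; this is legitimate because Shargorodsky's one-dimensional bounds are stated for measures. The subtlety is that $\nu$ may contain atoms: whenever $|\gamma_k| > 0$, the curve $\ell$ contains a whole vertical segment $\{x_1^{(k)}\}\times(0,a)$ on which $V$ lives, and $\nu$ then has a point mass at $x_1^{(k)}$. A $\delta$-interaction of positive intensity always produces (at least) one negative eigenvalue, so each of the $N = \mathrm{card}\,\Sigma$ atoms contributes a bounded additive cost; this is where the "$+N$" in \eqref{Gest3} comes from. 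The non-atomic part of $\nu$ is handled by \eqref{Est1*}, producing the term $C_9\sum_{\mathcal{F}_n > c_1}\sqrt{\mathcal{F}_n}$, after rewriting $\mathcal{F}_n$, $F_0$ in terms of $\nu$ as in \eqref{calAn}.

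For the $\mathcal{H}_2$-term I would localize to the cells $S_n = J_n \times I$ via the variational principle, exactly as in \eqref{estimate2}, giving $N_-(q_{2,2V,\ell}) \le \sum_n N_-(q_{2,2V,\ell_n})$ with $\ell_n = \ell \cap S_n$ and the constraint $\int_{S_n} w\,dx = 0$. On a single cell, the needed bound is the curve analogue of Lemma \ref{lemma6}: there is $C_{11} > 0$ with $N_-(q_{2,2V,\ell_n}) \le C_{11}\|V\|^{(\mathrm{av})}_{\mathcal{B},\ell_n} = C_{11}\mathcal{C}_n$. This in turn follows from the trace-type results of \cite{Eugene} for potentials on curves combined with the Poincar\'e inequality on $S_n$ (to absorb the mean-zero functions), in the same two-step manner as the proof of Lemma \ref{lemma6}: first a bound of the form $C_{11}\mathcal{C}_n + 1$, then removal of the additive $1$ by noting that $N_-(q_{2,2V,\ell_n}) = 0$ once $\mathcal{C}_n$ is below a threshold. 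Dropping the cells with $\mathcal{C}_n \le c_2$ and summing yields $C_{10}\sum_{\mathcal{C}_n > c_2}\mathcal{C}_n$. Collecting the three contributions gives \eqref{Gest3}.

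Finally, if $\Sigma$ is infinite, then $\nu$ has infinitely many atoms $x_1^{(k)}$, $k\in\mathbb{N}$, of positive intensity $\nu(\{x_1^{(k)}\}) = \int_{\gamma_k} V\,ds > 0$. By the spacing hypothesis in \eqref{d1}/the construction before Lemma \ref{lemma1}, these are the points of interaction of a one-dimensional operator $H_{X,\alpha}$ of the form \eqref{d2} with $\alpha_k = \nu(\{x_1^{(k)}\}) > 0$, whose quadratic form is a lower bound (after the factor-$2$ splitting) for $q_{1,2\nu,\mathbb{R}}$ restricted to functions depending only on $x_1$. By Lemma \ref{lemma1}, after possibly passing to a sparser subsequence of the $x_1^{(k)}$ to guarantee increasing gaps, that operator has infinitely many negative eigenvalues; lifting the corresponding trial functions $\varphi_k(x_1)\cdot 1$ into $\mathrm{Dom}(q_{V,\ell})$ and using that they have disjoint supports shows $N_-(q_{V,\ell}) = \infty$.

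The main obstacle is the curve analogue of Lemma \ref{lemma6}: one must control $\int_{\ell_n} V|w|^2\,ds$ by $\|V\|^{(\mathrm{av})}_{\mathcal{B},\ell_n}\int_{S_n}|\nabla w|^2\,dx$ for mean-zero $w$, i.e. a trace inequality from $W^1_2(S_n)$ into $L_{\mathcal{A}}(\ell_n)$ with the correct Orlicz exponent and a constant independent of $n$. This is exactly the content of the relevant lemmas in \cite{Eugene} applied on the fixed-shape cell $S_n$, but verifying the uniformity in $n$ and the interplay with the Lipschitz character of $\ell$ is the delicate point; the atomic bookkeeping for the "$+N$" term is routine by comparison.
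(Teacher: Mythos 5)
Your plan follows the paper's proof essentially step for step: the same $\mathcal{H}_1\oplus\mathcal{H}_2$ splitting with the factor-$2$ inequality, reduction of the $\mathcal{H}_1$ part to a one-dimensional problem for the measure $\nu$ estimated as in \eqref{Est1*}, a cell-wise bound $N_-(q_{2,2V,\ell_n})\le C_{11}\|V\|^{(\mathrm{av})}_{\mathcal{B},\ell_n}$ obtained from the covering/Orlicz trace lemmas of \cite{Eugene} together with the Poincar\'e inequality and the threshold trick, and Lemma \ref{lemma1} for the infinite-$\Sigma$ statement.

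The one place where your reasoning, as written, does not hold up is the origin of the ``$+N$'' term. You argue that a $\delta$-interaction of positive intensity always produces at least one negative eigenvalue, so each atom contributes a bounded additive cost. That fact is a \emph{lower} bound and cannot yield the upper bound you need; what is required is (i) a splitting of the counting function that separates the diffuse part of $\nu$ from its atomic part --- the paper writes $q_{1,2V,\ell}$ as the kinetic term minus $2\int|w|^2\,d\nu$ minus $\sum_k c_k'|w(x_1^{(k)})|^2$ and invokes \cite[Lemma 3.6]{Grig} to obtain $N_-(q_{1,2V,\ell})\le N_-(q_{1,2\nu})+N_-(q_{2,c_k'})$ --- and (ii) the fact that a one-dimensional operator with $N$ attractive $\delta$-interactions has \emph{at most} $N$ negative eigenvalues (cited from \cite{Alv}). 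Without (i) you cannot apply \eqref{Est1*} to ``the non-atomic part of $\nu$'' and account for the atoms separately, since $N_-$ is not subadditive in the potential unless the quadratic form is split; and without (ii) the additive cost per atom is not justified. With these two ingredients inserted, your argument coincides with the paper's. (For the infinite-$\Sigma$ claim, your caveat about passing to a sparser subsequence with large gaps is essentially the same leap the paper takes through Lemma \ref{lemma1}, whose statement allows one to choose the interaction points, so you are no worse off than the paper there.)
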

\begin{proof}
Let $q_{1,2V, \ell}$ and $q_{2, 2V,\ell}$ be the restrictions of the form $q_{V, \ell}$ to the subspaces $\mathcal{H}_1$ and $\mathcal{H}_2$ respectively (see \eqref{H}), then
\begin{equation}\label{es1}
N_-(q_{V,\ell}) \le N_-(q_{1, 2V,\ell})) + N_-(q_{2, 2V,\ell})
\end{equation} (cf. \eqref{estimate}).
Let us start by estimating the first term in the right-hand side of \eqref{es1}.
On the complement of $\Sigma$, $\nu(\{x_1\}) = 0$ for all $x_1\in\mathbb{R}$. This implies
\begin{eqnarray*}
\int_{\ell}V(x)|u(x)|^2\,ds(x) &=& \int_{\mathbb{R}} |w(x_1)|^2\,d\nu(x_1) + |w(x_1)|^2\underset{k\in\mathbb{N}}\sum\int_{\gamma_k}V(x_2)\,dx_2 \\&=& \int_{\mathbb{R}}|w(x_1)|^2\,d\nu(x_1) + \underset{k\in\mathbb{N}}\sum c_k|w(x_1)|^2,
\end{eqnarray*}
where $ c_k := \int_{\gamma_k}V(x_2)\,dx_2 < \infty$.
Hence
\begin{eqnarray*}
q_{1,2V,\ell}[u] &=& \int_{S}|\nabla u(x)|^2 \,dx  - 2\int_{\ell}V(x)|u(x)|^2\,dx\\ &=& \int_{\mathbb{R}}|w'(x_1)|^2\,dx_1 - 2\int_{\mathbb{R}}|w(x_1)|^2\,d\nu(x_1) - \underset{k\in\mathbb{N}}\sum c'_k|w(x_1)|^2,
\end{eqnarray*}where $c'_k := 2c_k$. Let
\begin{eqnarray*}
q_{1,2\nu}[w] &:=& \int_{\mathbb{R}}|w'(x_1)|^2\,dx_1 - 2\int_{\mathbb{R}}|w(x_1)|^2\,d\nu(x_1),\\
\textrm{Dom}(q_{1, 2\nu}) &=& W^1_2(\mathbb{R})\cap L^2\left(\mathbb{R}, d\nu\right),\\
q_{2,c'_k}[w] &:=& \int_{\mathbb{R}}|w'(x_1)|^2\,dx_1 -  \underset{k\in\mathbb{N}}\sum c'_k|w(x_1)|^2,\\
\textrm{Dom}(q_{2, c'_k}) &=& W^1_2(\mathbb{R}).
\end{eqnarray*}
Then, it follows from \cite[Lemma 3.6]{Grig} that
\begin{equation}\label{es2}
N_-(q_{1, 2V,\ell}) \le N_-(q_{1, 2\nu}) + N_-(q_{2, c'_k}).
\end{equation}
Similarly to \eqref{Est1*} one has
\begin{equation}\label{es3}
N_-(q_{1, 2\nu}) \le 1 + 7.16\sum_{\{n\in\mathbb{Z}, \mathcal{F}_n > 0.046\}}\sqrt{\mathcal{F}_n}\,.
\end{equation}
If $\Sigma$ is finite, then
\begin{equation}\label{es4}
N_-(q_{2, c'_k}) \le N
\end{equation}
(see, e.g., \cite{Alv}), otherwise, Lemma \ref{lemma1} implies
\begin{equation}\label{es5}
N_-(q_{2, c'_k}) = \infty .
\end{equation}
Now, it remains to estimate the second term in the right-hand side of \eqref{es1}.
Let
\begin{eqnarray*}
q_{2, 2V,\ell_n}[u] &:=& \int_{S_n} |\nabla u(x)|^2dx - \int_{\ell_n}V(x)|u(x)|^2\, ds(x)\\
\textrm{Dom}(q_{2,2V,\ell_n})& =& \{u\in W^1_2(S_n) \cap L^2(\ell_n, Vds)\, :\, \int_{S_n}u(x)\,dx =0\}.
\end{eqnarray*}
For any $V\in L_{\mathcal{B}}(\ell_n), \, V\ge 0$ and any $r\in\mathbb{N}$, following a similar argument in the proof of \cite[Lemma 7.6]{Eugene}, one can show that there exists a finite cover of $\ell_n$ by rectangles $S_{n_k},\,k = 1, ..., r_0$ such that $r_0 \le r$ and
\begin{equation}\label{eq1}
\int_{\ell_n}V(x)|u(x)|^2\,ds(x) \le C_{12}r^{-1}\|V\|^{(\textrm{av})}_{\mathcal{B}, \ell_n}\int_{S_n}|\nabla u(x)|^2\,dx
\end{equation}
for all $u\in W^1_2(S_n)\cap L^2(\ell_n, Vds)$ with $\int_{S_{n_k}}u(x)\,dx =0$, where $C_{12}$ is a constant independent of $V$.
Now, using \eqref{eq1} instead of \cite[Lemma 7.6]{Eugene} in the proof of \cite[Lemma 7.7]{Eugene}, one can easily show similarly to \cite[Lemma 7.8]{Eugene} that there is a constant $C_{11} > 0$ such that
$$
N_-(q_{2, 2V, \ell_n}) \le C_{11}\|V\|^{(\textrm{av})}_{\mathcal{B}, \ell_n}\,,\,\,\,\forall V \ge 0.
$$
If $\|V\|^{(\textrm{av})}_{\mathcal{B}, \ell_n} < \frac{1}{C_{11}}$, then $N_-(q_{2, 2V, \ell_n}) = 0$. Thus, similarly to \eqref{estimate2} one has that for any $c_2 < \frac{1}{C_{11}}$ that
\begin{equation}\label{es6}
N_-(q_{2, 2V, \ell}) \le C_{11}\underset{\{\mathcal{C}_n > c_2, \,n\in\mathbb{Z}\}}\sum \mathcal{C}_n\,,\,\,\,\forall V\ge 0.
\end{equation}
Hence, the statement of the Theorem follows from \eqref{es1}, \eqref{es2}, \eqref{es3}, \eqref{es4}, \eqref{es5} and \eqref{es6}.
\end{proof}

\section*{Competing interests}
The author declares that he has no competing interests.
\section*{Acknowledgment}
This research was supported by the Commonwealth Scholarship Commission in the United Kingdom, grant UGCA-2013-138. The author is also grateful to the reviewers for their helpful comments and suggestions.

\end{document}